\newcommand{\ud}{\mathrm{d}}
\newcommand{\dv}{\vec{d}}
\newcommand{\gammav}{\vec{\gamma}}
\newcommand{\up}{\mathrm}
\newcommand{\mbf}{\mathbf}
\newcommand{\mcal}{\mathcal}
\newcommand{\bb}{\mathbb}
\newcommand{\bs}{\boldsymbol}
\newcommand{\norm}[1]{\left\lVert#1\right\rVert}
\newcommand{\E}[1]{\bb{E}\left[#1\right]}
\newcommand{\T}{\intercal}
\newtheorem{theorem}{Theorem}
\newtheorem{remark}{Remark}
\DeclareMathOperator*{\argmax}{arg\,max}
\title{Bayesian Matrix Completion via Adaptive Relaxed Spectral Regularization}
\author{Yang Song$^\dagger$ \and Jun Zhu$^\ddagger$\\
$^\dagger$Department of Physics, Tsinghua University, yang.song@zoho.com\\
$^\ddagger$Department of Computer Science \& Tech., State Key Lab of Intell. Tech. \& Sys.; CBICR Center;\\
Tsinghua National Lab for Information Science and Tech., Tsinghua University, dcszj@mail.tsinghua.edu.cn
}
\begin{document}
%

\maketitle
\begin{abstract}
\begin{quote}
Bayesian matrix completion has been studied based on a low-rank matrix factorization formulation with promising results. However, little work has been done on Bayesian matrix completion based on the more direct spectral regularization formulation. We fill this gap by presenting a novel Bayesian matrix completion method based on spectral regularization. In order to circumvent the difficulties of dealing with the orthonormality constraints of singular vectors, we derive a new equivalent form with relaxed constraints, which then leads us to design an adaptive version of spectral regularization feasible for Bayesian inference. Our Bayesian method requires no parameter tuning and can infer the number of latent factors automatically. Experiments on synthetic and real datasets demonstrate encouraging results on rank recovery and collaborative filtering, with notably good results for very sparse matrices.
\end{quote}
\end{abstract}

\section{Introduction}
Matrix completion has found applications in many situations, such as collaborative filtering. Let $Z_{m\times n}$ denote the data matrix with $m$ rows and $n$ columns, of which only a small number of entries are observed, indexed by $\Omega \subset [m] \times [n]$.
We denote the possibly noise corrupted observations of $Z$ on $\Omega$ as $P_{\Omega}(X)$, where $P_{\Omega}$ is a projection operator that retains entries with indices from $\Omega$ and replaces others with $0$. The matrix completion task aims at completing missing entries of $Z$ based on $P_{\Omega}(X)$, under the low-rank assumption $\up{rank}(Z) \ll \min(m,n)$. When a squared-error loss is adopted, it can be written as solving:
\begin{align}\tag{P0}
\min_{Z} \frac{1}{2\sigma^2} \norm{P_{\Omega} (X - Z)}_F^2 + \lambda~\up{rank}(Z), \label{rank}
\end{align}
where $\norm{P_{\Omega}(A)}_F^2 = \sum_{(i,j)\in\Omega}a_{ij}^2$; $\lambda$ is a positive regularization parameter; and $\sigma^2$ is the noise variance.

Unfortunately, the term $\up{rank}(Z)$ makes P0 NP-hard. Therefore, the nuclear norm $\norm{Z}_*$
has been widely adopted as a convex surrogate~\cite{fazel2002matrix} to the rank function to turn P0 to a convex problem:
\begin{align}\tag{P1}
\quad \min_{Z} \frac{1}{2\sigma^2} \norm{P_{\Omega} (X - Z)}_F^2 + \lambda \norm{Z}_* \label{nuclear}.
\end{align}

Though P1 is convex, the definition of nuclear norm makes the problem still not easy to solve. Based on a variational formulation of the nuclear norm, it has been popular to solve an equivalent and easier low-rank matrix factorization (MF) form of P1:
\begin{align}
	\min_{A,B} \frac{1}{2\sigma^2} \norm{P_{\Omega} (X - AB^\T)}_F^2 + \frac{\lambda}{2}(\norm{A}_F^2 +\norm{B}_F^2). \label{mf}
\end{align}
Though not joint convex, this MF formulation can be solved by alternately optimizing over $A$ and $B$ for local optima.

As the regularization terms of MF are friendlier than the nuclear norm, many matrix factorization methods have been proposed to complete matrices, including maximum-margin matrix factorization (M\textsuperscript{3}F)~\cite{srebro2004maximum,rennie2005fast} and Bayesian probabilistic matrix factorization (BPMF) \cite{lim2007variational,salakhutdinov2008bayesian}. Furthermore, the simplicity of the MF formulation helps people adapt it and generalize it; e.g., \cite{xu2012nonparametric,xu2013fast} incorporate maximum entropy discrimination (MED) and nonparametric Bayesian methods to solve a modified MF problem.

In contrast, there are relatively fewer algorithms to directly solve P1 without the aid of matrix factorization. Such methods need to handle the spectrum of singular values. These \textit{spectral regularization} algorithms require optimization on a Stiefel manifold~\cite{stiefel1935richtungsfelder,james1976topology}, which is defined as the set of $k$-tuples $(\mbf{u}_1,\mbf{u}_2,\cdots,\mbf{u}_k)$ of orthonormal vectors in $\bb{R}^n$. This is the main difficulty that has prevented the attempts, if any, to develop Bayesian methods based on the spectral regularization formulation.

Though matrix completion via spectral regularization is not easy, there are potential advantages over the matrix factorization approach. One of the benefits is the direct control over singular values. By imposing various priors on singular values, we can incorporate abundant information to help matrix completion. For example, Todeschini et al.~\cite{todeschini2013probabilistic} put sparsity-inducing priors on singular values, naturally leading to hierarchical adaptive nuclear norm (HANN) regularization, and they reported promising results.

In this paper, we aim to develop a new formulation of the nuclear norm, hopefully having the same simplicity as MF and retaining some good properties of spectral regularization. The idea is to prove the \textit{orthonormality insignificance} property of P1. Based on the new formulation, we develop a novel Bayesian model via a sparsity-inducing prior on singular values, allowing various dimensions to have different regularization parameters and automatically infer them. This involves some natural modifications to our new formulation to make it more flexible and adaptive, as people typically do in Bayesian matrix factorization. Empirical Bayesian methods are then employed to avoid parameter tuning. Experiments about rank recovery on synthetic matrices and collaborative filtering on some popular benchmark datasets demonstrate competitive results of our method in comparison with various state-of-the-art competitors. Notably, experiments on synthetic data show that our method performs considerably better when the matrices are very sparse, suggesting the robustness offered by using sparsity-inducing priors.

\section{Relaxed Spectral Regularization}
Bayesian matrix completion based on matrix factorization is relatively easy, with many examples~\cite{lim2007variational,salakhutdinov2008bayesian}. In fact, we can view \eqref{mf} as a \textit{maximum a posterior} (MAP) estimate of a simple Bayesian model, whose likelihood is Gaussian, i.e., for $(i,j) \in \Omega$, $X_{ij} \sim \mcal{N}((AB^\T)_{ij},\sigma^2)$, and the priors on $A$ and $B$ are also Gaussian, i.e., $p(A) \propto \exp (-\lambda \norm{A}^2_F/2)$ and $p(B) \propto \exp (-\lambda \norm{B}_F^2/2)$. It is now easy to do the posterior inference since the prior and likelihood are conjugate.

However, the same procedure faces great difficulty when we attempt to develop Bayesian matrix completion based on the more direct spectral regularization formulation P1. This is because the prior $p(Z) \propto \exp(-\lambda \norm{Z}_*)$ is not conjugate to the Gaussian likelihood (or any other common likelihood). To analyze $p(Z)$ more closely, we can conduct singular value decomposition (SVD) on $Z$ to get $Z = \sum_{k=1}^r d_k \mbf{u}_k \mbf{v}_k^\T$, where $\dv := \{d_k: k\in [r]\}$ are singular values; $U:=\{ \mbf{u}_k: k \in [r] \}$ and $V := \{ \mbf{v}_k : k \in [r] \}$ are orthonormal singular vectors on Stiefel manifolds. Though we can define a factorized prior $p(Z) = p(\dv) p(U)p(V)$, any prior on $U$ or $V$ (e.g., the uniform Haar prior~\cite{todeschini2013probabilistic}) needs to deal with a Stiefel manifold, which is highly nontrivial.

In fact, handling distributions embedded on Stiefel manifolds still remains a largely open problem, though some results~\cite{iyrne2013geodesic,hoff2009simulation,dobigeon2010bayesian} exist in the literature of directional statistics.
Fortunately, as we will prove in Theorem \ref{cool2} that the orthonormality constraints on $U$ and $V$ are not necessary for spectral regularization. Rather,
the unit sphere constraints $\norm{\mbf{u}_k}\leq 1$ and $\norm{\mbf{v}_k}\leq 1$, for all $k \in [r]$, are sufficient to get the same optimal solutions to P1. We call this phenomenon \textit{orthonormality insignificance}. We will call spectral regularization with orthonormality constraints relaxed by unit sphere constraints \textit{relaxed spectral regularization}.

\subsection{Orthonormality insignificance for spectral regularization}
We now present an equivalent formulation of the spectral regularization in P1 by proving its orthornormality insignificance property.

With the SVD of $Z$, we first rewrite P1 equivalently as P1$'$ to show all constraints explicitly:
\begin{align}\tag{P1$'$}
 \min_{\dv, U, V} &\frac{1}{2\sigma^2} \norm{P_{\Omega} \left(X - \sum_{k=1}^r d_k \mbf{u}_k\mbf{v}_k^\T \right)}_F^2 + \lambda \sum_{k=1}^r d_k\label{org} \\
s.t. \quad & d_k \geq 0,\quad \norm{\mbf{u}_k} = 1,\quad \norm{\mbf{v}_k} = 1,\quad \forall k \in [r]\notag\\
\quad &\mbf{u}_i^\T \mbf{u}_j = 0,\quad \mbf{v}_i^\T \mbf{v}_j = 0,\quad \forall i,j \in [r] ~\up{and}~i\neq j\notag,
\end{align}
where $r = \min(m,n)$. Then, we can have an equivalent formulation of P1 as summarized in Theorem~\ref{cool2}, which lays the foundation for the validity of relaxed spectral regularization.
\begin{theorem}\label{cool2} Let $s$ be the optimal value of P1 (P1$'$), and let $t$ be the optimal value of P2 as defined below:
\begin{align}\tag{P2}
\min_{\bs{\alpha}, \bs{\beta}, \dv} ~ & \frac{1}{2\sigma^2}\norm{P_{\Omega}\left(X-\sum_{k=1}^r d_k \bs{\alpha}_k \bs{\beta}_k^\intercal \right)}_{F}^2 + \lambda \sum_{k=1}^r d_k \label{PP} \\
s.t. \quad & d_k \geq 0, \quad \norm{\bs{\alpha}_k}_2 \leq 1, \quad \norm{\bs{\beta}_k}_2 \leq 1, \quad \forall k \in [r], \notag
\end{align}
Then, we have $s = t$. Furthermore, suppose an optimal solution for P2 is $(\dv^*,\bs{\alpha}^*,\bs{\beta}^*)$, then $Z^* = \sum_{k=1}^r d_k^* \bs{\alpha}_k^* \bs{\beta}_k^{*\T}$ is also an optimal solution for P1. Similarly, for any optimal solution $Z^\dagger$ for P1, there exists a decomposition $Z^\dagger = \sum_{k=1}^r d_k^\dagger \bs{\alpha}_k^\dagger {\bs{\beta}_k^\dagger}^\T$ optimal for P2.
\end{theorem}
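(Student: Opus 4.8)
The plan is to prove the two inequalities $t \le s$ and $s \le t$ separately, the first being an immediate consequence of a feasibility inclusion and the second resting on a norm bound. First I would observe that any feasible point of P1$'$ is automatically feasible for P2: the orthonormal singular vectors satisfy $\norm{\mbf{u}_k} = 1 \le 1$ and $\norm{\mbf{v}_k} = 1 \le 1$, so dropping the orthogonality conditions $\mbf{u}_i^\T\mbf{u}_j = 0$ and $\mbf{v}_i^\T\mbf{v}_j = 0$ only enlarges the feasible region. Since the two objectives are given by the identical expression, minimizing over the larger P2 region can only decrease the optimum, giving $t \le s$.

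The hard part is the reverse inequality $s \le t$, which is exactly where orthonormality gets shown to be insignificant. Given any P2-feasible triple $(\dv, \bs{\alpha}, \bs{\beta})$, I would set $Z := \sum_{k=1}^r d_k \bs{\alpha}_k \bs{\beta}_k^\T$ and bound its nuclear norm. The key estimate is $\norm{Z}_* \le \sum_{k=1}^r d_k$: each summand $d_k \bs{\alpha}_k \bs{\beta}_k^\T$ is rank one with single singular value $d_k \norm{\bs{\alpha}_k}\norm{\bs{\beta}_k} \le d_k$, so subadditivity of the nuclear norm gives $\norm{Z}_* \le \sum_k d_k \norm{\bs{\alpha}_k}\norm{\bs{\beta}_k} \le \sum_k d_k$. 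Consequently the P2 objective at $(\dv,\bs{\alpha},\bs{\beta})$ dominates $\frac{1}{2\sigma^2}\norm{P_\Omega(X-Z)}_F^2 + \lambda\norm{Z}_*$, which is precisely the P1 objective evaluated at the (unconstrained) candidate $Z$ and hence is at least $s$. Taking the infimum over all P2-feasible points yields $t \ge s$, and combining the two bounds gives $s = t$.

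For the correspondence of optimizers I would trace equality through this same chain. If $(\dv^*, \bs{\alpha}^*, \bs{\beta}^*)$ attains $t$, then since $t = s$ both inequalities above must be tight, forcing $\frac{1}{2\sigma^2}\norm{P_\Omega(X-Z^*)}_F^2 + \lambda\norm{Z^*}_* = s$, i.e.\ $Z^* = \sum_k d_k^* \bs{\alpha}_k^* \bs{\beta}_k^{*\T}$ is optimal for P1. Conversely, given an optimal $Z^\dagger$ for P1, I would take its SVD $Z^\dagger = \sum_k d_k^\dagger \mbf{u}_k\mbf{v}_k^\T$ and set $\bs{\alpha}_k^\dagger = \mbf{u}_k$, $\bs{\beta}_k^\dagger = \mbf{v}_k$; this point is P2-feasible with objective value $\frac{1}{2\sigma^2}\norm{P_\Omega(X-Z^\dagger)}_F^2 + \lambda\sum_k d_k^\dagger = \frac{1}{2\sigma^2}\norm{P_\Omega(X-Z^\dagger)}_F^2 + \lambda\norm{Z^\dagger}_* = s = t$, hence optimal for P2.

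The only real subtlety, beyond correctly invoking the rank-one nuclear-norm identity and the triangle inequality, is bookkeeping about the number of terms: fixing $r = \min(m,n)$ guarantees enough rank-one components to represent any $Z$ (padding with zero singular values when $\up{rank}(Z)$ is smaller), so no loss of generality arises from using a common index range in both problems. I expect the norm bound $\norm{Z}_* \le \sum_k d_k$ to be the crux; everything else is feasibility inclusion and equality-tracing.
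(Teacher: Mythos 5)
Your proof is correct, and its skeleton --- feasibility inclusion for $t \le s$, the bound $\norm{Z}_* \le \sum_{k} d_k$ for $s \le t$, then equality-tracing for the correspondence of optimizers --- matches the paper's. The genuine difference is how the crux inequality is established. You obtain $\norm{Z}_* \le \sum_k d_k \norm{\bs{\alpha}_k}\norm{\bs{\beta}_k} \le \sum_k d_k$ in one line from subadditivity of the nuclear norm together with the fact that the rank-one matrix $\bs{\alpha}_k\bs{\beta}_k^\T$ has the single singular value $\norm{\bs{\alpha}_k}\norm{\bs{\beta}_k}$; this is the standard variational view of the nuclear norm, it is shorter, and it makes the generalization to arbitrary loss functions (the paper's Remark~2) completely transparent, since the loss term is never touched. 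The paper instead expands each $\bs{\alpha}_k$ and $\bs{\beta}_k$ in the left and right singular bases of $Z$, derives $\sigma_i = \sum_k d_k x_{ki} y_{ki}$, and applies Cauchy--Schwarz twice. That computation is longer but buys something your argument does not: an explicit equality condition (tightness requires $\sum_i x_{ki}^2 = \sum_i y_{ki}^2 = 1$ and $x_{ki} = y_{ki}$), which the authors exploit in Remark~1 and in the experiments to explain why the sampled vectors tend to orthonormalize automatically. Both routes are sound; yours is the more economical proof of the theorem as stated, and your closing remark about padding with zero singular values so that $r = \min(m,n)$ suffices is the same bookkeeping the paper performs.
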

\begin{proof}[Sketch of the proof]
Let $Z^* = \sum_{k=1}^r d_k^*\bs{\alpha}_k^* {\bs{\beta}_k^*}^\T$ be an optimal solution for P2 with the optimal value $t$. Since P1$'$ is basically the same optimization problem as P2 with stricter constraints, we have $s \geq t$.

Conduct singular value decomposition to obtain $Z^* = \sum_{k=1}^r \sigma_k^* \mbf{u}_k^* {\mbf{v}_k^*}^\T$ and we can prove that $\norm{Z^*}_* = \sum_{k=1}^r \sigma_k^* \leq \sum_{k=1}^r d_k^*$. If $\sum_{k=1}^r \sigma_k^* < \sum_{k=1}^r d_k^*$, then we can plug $Z^*$ into P1 to get a smaller value than $t$, contradicting $s \geq t$. As a result, $\sum_{k=1}^r \sigma_k^* = \sum_{k=1}^r d_k^*$ and $s = t$.

Furthermore, since $s = t$ and plugging $Z^*$ into P1 can lead to a value at least as small as $t$, we conclude that $Z^*$ is also an optimal solution for P1. Let $Z^\dagger$ be any optimal solution for P1, we can also prove that there is a decomposition $Z^\dagger = \sum_{k=1}^{r} d_k^\dagger \bs{\alpha}_k^\dagger {\bs{\beta}_k^\dagger}^\T$ to be an optimal solution for P2.

The formal proof and some remarks are provided in the supplementary material.
\end{proof}

Now we have justified the orthonormality insignificance property of spectral regularization. As a result, P2 serves to be another equivalent form of P1, similar to the role played by MF. This \emph{relaxed spectral regularization} formulation lies somewhere between MF and spectral regularization, since it has more (but easily solvable) contraints than MF and still retains the form of SVD. As discussed before, it is easier to conduct Bayesian inference on a posterior without strict orthonormality constraints, and therefore models on relaxed spectral regularization are our focus of investigation. 

In addition, Theorem~1 can be generalized to \emph{arbitrary} loss besides squared-error loss, which means it is as widely applicable as MF. See Remark~2 in the supplementary material for more details.

\subsection{Adaptive relaxed spectral regularization}
Based on the relaxed spectral regularization formulation in Theorem~1, a Bayesian matrix completion algorithm similar to BPMF can be directly derived. Let the priors of $\bs{\alpha}_k$, $\bs{\beta}_k$ be uniform Haar priors within unit spheres; and the prior of $d_k$'s to be exponential distributions, then the posterior has exactly the same form as P2. Such an algorithm should have similar performance to BPMF.

Instead of building models on P2 exactly, we consider another modified form where each $d_k$ has its own positive regularization parameter $\gamma_k$. Obviously this is a generalization of relaxed spectral regularization and admits it as a special case. We define the adaptive relaxed spectral regularization problem as
\begin{align}\tag{P3}
\min_{ \dv, U, V } &\frac{1}{2\sigma^2} \norm{P_{\Omega} \left(X - \sum_{k=1}^r d_k  \mbf{u}_k\mbf{v}_k^\T \right)}_F^2 + \sum_{k=1}^r \gamma_k d_k\\
s.t. \quad & d_k \geq 0, \quad \norm{\mbf{u}_k}_2 \leq 1,\quad  \norm{\mbf{v}_k}_2 \leq 1, \quad k \in [r].\notag
\end{align}

Such a variation is expected to be more flexible and better at bridging the gap between the nuclear norm and the rank functions, thus being more capable of approximating rank regularization than the standard nuclear norm. Similar ideas arose in~\cite{todeschini2013probabilistic} before and was called \textit{hierarchical adaptive nuclear norm} (HANN). But note that although we propose a similar approach to HANN, our regularization is substantially different because of the relaxed constraints.

However, P3 may be harder to solve than the original P2 due to the difficulty in hyperparameter tuning, since adaptive regularization introduces dramatically more hyperparameters. We will build hierarchical priors for these hyperparameters and derive a Bayesian algorithm for solving P3 and inferring hyperparameters simultaneously in the following section.

\section{Bayesian Matrix Completion via Adaptive Relaxed Spectral Regularization}

\subsection{Probabilistic model}
We now turn P3 into an equivalent MAP estimation task. Naturally, the squared-error loss in P3 corresponds to the negative logarithm of the Gaussian likelihood, $X_{ij} \sim \mcal{N}(\sum_{k=1}^r d_k u_{ki}v_{kj},\sigma^2)$, where $u_{ki}$ denotes the $i$th term in $\mbf{u}_k$; likewise for $v_{kj}$. For priors, we adopt uniform Haar priors on $U$ and $V$ within unit spheres, and exponential priors on $\dv$, as summarized below:
\begin{align*}
	\tilde{p}(\mbf{u}_k) &= \begin{cases}
	1,\quad \norm{\mbf{u}_k} \leq 1\\
	0,\quad \norm{\mbf{u}_k} > 1
		\end{cases},\quad \forall k \in [r]\\
	\tilde{p}(\mbf{v}_k) &= \begin{cases}
	1,\quad \norm{\mbf{v}_k} \leq 1\\
	0,\quad \norm{\mbf{v}_k} > 1
	\end{cases},\quad \forall k \in [r]\\
	\quad	p(d_k\mid \gamma_k) &= \gamma_k e^{-\gamma_k d_k},\quad d_k \geq 0,\quad \forall k \in [r]
	\quad
\end{align*}
where $\tilde{p}$ denotes an unnormalized probability density function (p.d.f.). It can be checked that under this probabilistic model, the negative log posterior p.d.f. with respect to $(\dv, U, V)$ is exactly proportional to P3.

\begin{figure}
	\centering
	\includegraphics[width=\columnwidth]{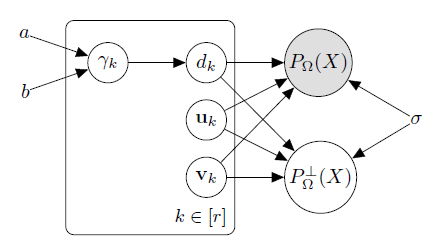}
	\caption{The graphical model for adaptive relaxed spectral regularization, where $P_{\Omega}(A) = \{ a_{ij}\mid (i,j)\in\Omega\}$ and $P_{\Omega}^{\perp}(A) = \{ a_{ij}\mid (i,j)\not \in\Omega\}$.} \label{graph}
\end{figure}

Now we precede to treat regularization coefficients $\gammav := \{ \gamma_k : k \in [r] \}$ as random variables and assume gamma priors on them, i.e., $p(\gamma_k) \propto \gamma_k^{a-1} e^{-b\gamma_k},~ \gamma_k \geq 0,~\forall k \in [r]$. This has two obvious advantages. First, it includes $\gammav$ into the Bayesian framework so that values of these coefficients can be inferred automatically without being tuned as hyperparameters. Second, the prior on $d_k$ after marginalizing out $\gamma_k$'s becomes $p(d_k) = \int_0^\infty p(d_k\mid \gamma_k) p(\gamma_k) \ud \gamma_k = \frac{ab^a}{(d_k+b)^{a+1}}$, which is effectively a Pareto distribution. This distribution has a heavier tail compared to the exponential distribution~\cite{todeschini2013probabilistic}, and is therefore expected to be better at sparsity-inducing~\cite{bach2012optimization}.

The graphical model is shown in Figure~\ref{graph}, where we explicitly separate the observed entries of $X$, i.e., $P_{\Omega}(X)$, and the unobserved ones, i.e., $P_{\Omega}^{\perp}(X)$. Due to the conditional independency structure, we can simply marginalize out $P_{\Omega}^{\perp}(X)$ and get the joint distribution
\begin{align}
	& p( \dv, U, V, \gammav, P_\Omega(X)\mid a,b,\sigma)\notag \\ \label{pdf}
		&\propto \left(\frac{1}{2\sigma^2}\right)^{|\Omega|/2}\exp \left[ -\frac{1}{2\sigma^2} \norm{P_{\Omega}(X - \sum_{k=1}^r d_k \mbf{u}_k\mbf{v}_k^\intercal)}_F^2\right]\notag\\
		&\cdot \prod_{k=1}^r \frac{b^a}{\Gamma (a)} \gamma_k^a e^{-(b+d_k)\gamma_k},
\end{align}
with all the variables implicitly constrained to their corresponding valid domains.

\subsection{Inference}
We now present the GASR (Gibbs sampler for Adaptive Relaxed Spectral Regularization) algorithm to infer the posterior, make predictions, and estimate the hyperparameters via Monte Carlo EM~\cite{casella2001empirical}.

\subsubsection{Posterior Inference}

Let $\mcal{N}(\mu,\sigma^2;a,b)$ denote 
the normal distribution $\mcal{N}(\mu,\sigma^2)$ truncated in $[a,b]$. We infer the posterior distribution $p(\gammav, \dv, U, V \mid a,b,\sigma,P_{\Omega}(X))$ via a Gibbs sampler as explained below:

\textbf{Sample $\gammav$:} The conditional distributions for regularization coefficients $\gammav$ are gamma distributions. We sample $\gammav$ by the formula $\gamma_k \sim \Gamma(a+1;b+d_k), \quad k \in [r]$.

\textbf{Sample $\dv$:} Conditioned on $(\gammav, U, V)$, the distribution for each $d_\alpha$ ($\alpha \in [r]$) is a truncated Gaussian,
$d_\alpha \sim \mathcal{N} (-\frac{B}{A},\frac{\sigma^2}{A};0,\infty),$
where
		$A = \sum_{(i,j)\in \Omega}\left(u_{\alpha i}v_{\alpha j}\right)^2$ and $B = \sum_{(i,j)\in \Omega} \left(\sum_{k\neq \alpha} d_k u_{\alpha i}u_{ki}v_{\alpha j}v_{kj} - X_{ij}u_{\alpha i}v_{\alpha j}\right)+\sigma^2 \gamma_\alpha$.

\textbf{Sample $U$ and $V$:} Given the other variables, the distribution for each element in $\mbf{u}_\alpha$'s (or $\mbf{v}_\alpha$'s) is a truncated Gaussian, 
$u_{\alpha \beta} \sim	\mcal{N}\left(-\frac{D}{C}, \frac{\sigma^2}{C}; - \rho, \rho\right),~ \alpha \in [m],~\beta \in [r],$
where 
$C = \sum_{(\beta,j)\in\Omega}d_\alpha^2 v_{\alpha j}^2$, $D = \sum_{(\beta,j)\in \Omega} \left(\sum_{k\neq \alpha} d_\alpha d_k u_{k\beta}v_{\alpha j}v_{kj} - d_\alpha X_{\beta j} v_{\alpha j}\right)$ and $\rho = \sqrt{1 - \sum_{k \neq \beta}u_{\alpha k}^2}$.
A similar procedure can be derived to sample $v_{\alpha\beta}$ and is therefore omitted here.

The time complexity of this Gibbs sampler is $O(|\Omega|r^2)$ per iteration. Although there is a unified scheme on sampling truncated distributions by cumulative distribution function (c.d.f.) inversion, we did not use it due to numerical instabilities found in experiments. In contrast, simple rejection sampling methods prove to work well.

\subsubsection{Prediction}
With the posterior distribution, we can complete the missing elements using the posterior mean:
\begin{align*}
&\E{P_{\Omega}^\perp(X)\mid P_{\Omega}(X),a,b,\sigma} \\
&= \int \cdots \int \E{P_{\Omega}^\perp(X)\mid P_{\Omega}(X),a,b,\sigma,\gammav, U, V, \dv } \\
&  \cdot p(\gammav, \dv, U, V \mid a,b,\sigma,P_{\Omega}(X)) \ud\gammav \ud \dv \ud U \ud V.
\end{align*}
This integral is intractable. But we can use samples to approximate the integral and complete the matrix. Since we use the Gaussian likelihood, we have
\begin{align*}
\E{P_{\Omega}^\perp(X)\mid P_{\Omega}(X),a,b,\sigma,\gammav, U, V, \dv }
= \sum_{k=1}^r d_k \mbf{u}_k \mbf{v}_k^\T.
\end{align*}
As a result, we can represent missing elements as $x_{ij} = \left<\sum_{k=1}^r d_k u_{ki}v_{kj}\right>,(i,j)\in \Omega^\perp$, which is the posterior sample mean of $P_{\Omega}^\perp (X)$. Here we denote the sample mean for $f(x)$ as $\langle f(x)\rangle := \frac{1}{n}\sum_{i=1}^n f(x_i)$, with $x_i$ being individual samples and $n$ being the number of samples.

\subsubsection{Hyperparameter Estimation}
We choose the hyperparameters $(a,b,\sigma)$ by maximizing model evidence $p(P_{\Omega}(X)\mid a,b,\lambda)$. Since direct optimization is intractable, we adopt an EM algorithm, with latent variable $L := (\dv, U, V, \gammav )$. In order to compute the joint expectation with respect to $P_{\Omega}(X)$ and $L$, we use Monte Carlo EM~\cite{casella2001empirical}, which can fully exploit the samples obtained in the Gibbs sampler.

The expectation of $P_{\Omega}(X)$ and $L$ with respect to $p(L\mid P_{\Omega}(X))$ can be written as
\begin{align}
&\bb{E}_{p(L\mid P_{\Omega}(X))}\left[\ln p(P_{\Omega}(X),L)\right]\notag\\
&=\E{ \ln p(\dv, U, V, \gammav, P_\Omega(X)\mid a,b,\sigma)} \notag \\
		&\approx -|\Omega|\ln \sigma - \frac{1}{2}\left<\norm{P_{\Omega}\left(X - \sum_{k=1}^r d_k \mbf{u}_k \mbf{v}_k^\intercal \right)}_F^2 \right>\notag\\
	&+ \sum_{i=1}^r [a\ln b-\ln\Gamma(a)+a\langle\ln\gamma_i\rangle - b\langle\gamma_i\rangle] + C,\label{E}
\end{align}
where $C$ is a constant. Eq.~\eqref{E} can be maximized with respect to $a,b,\sigma$ using Newton--Raphson method. The fix-point equations are
\begin{align}
	a_{t+1} &= a_t - \frac{\Psi(a_t) - \ln \left( r a_t / \sum_i \langle \gamma_i \rangle\right) - \sum_i \langle \ln \gamma_i \rangle / r}{\Psi'(a_t) - 1 / {a_t}} \nonumber \\
	b_{t+1} &= \frac{r a_{t+1}}{\sum_{i=1}^r \langle \gamma_i \rangle}  \nonumber \\
	\sigma^2 &= \frac{1}{|\Omega|}\left< \norm{P_{\Omega}\left(X - \sum_{k=1}^r d_k \mbf{u}_k \mbf{v}_k^\intercal \right)}_F^2\right>, \nonumber
\end{align}
where $\Psi(x)$ and $\Psi'(x)$ are digamma and trigamma functions respectively. In our experiments, we found the results not very sensitive to the number of samples used in $\langle \cdot \rangle$, so we fixed it to 5.
\section{Experiments}
\begin{figure*} 
\centering

\subfigure{
\includegraphics[width=.32\textwidth]{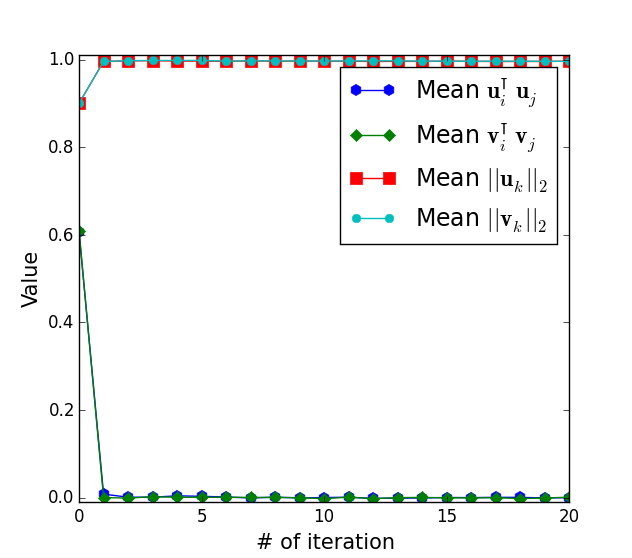}
}
\subfigure{
\includegraphics[width=.32\textwidth]{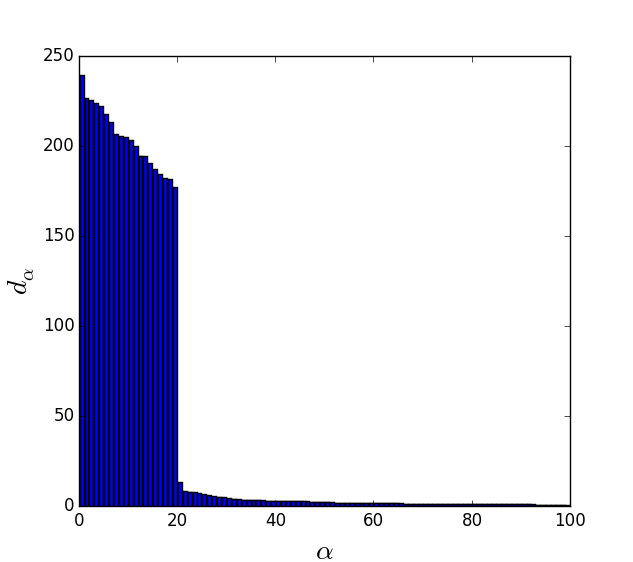}
}
\subfigure{
\includegraphics[width=.32\textwidth]{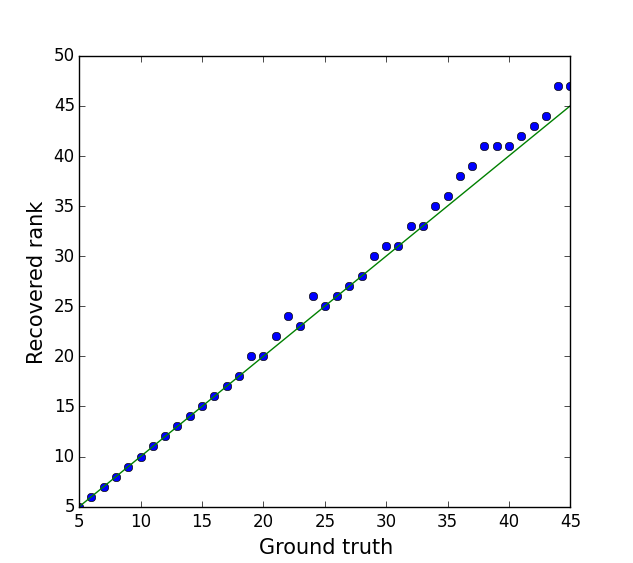}
}
\caption{
(a)\label{on} Experiment on $200\times 200$ synthetic matrix shows that vectors tend to get orthonormalized. We measure the mean values of inner products and norms of all column vectors in $U$ and $V$ for each single iteration. The fluctuations at the beginning are due to initialization. (b) The recovered rank of a $200\times 200$ synthetic matrix with rank 20. Following the principle presented in main text, we conclude that the number of latent factors is 20, matching the ground truth.\label{exam} (c) Rank recovery results on synthetic data, with solid line representing the ground truth.\label{rankrec}} 
\end{figure*}

We now present experimental results on both synthetic and real datasets to demonstrate the effectiveness on rank recovery and matrix completion.

\subsection{Experiments on synthetic data}
We have two experiments on synthetic data, one is for rank recovery, the other is for examining how well the algorithms perform in situations that matrices are very sparse.

In both experiments, we generate standard normal random matrices $A_{m\times q}$ and $B_{n \times q}$ and produce a rank-$q$ matrix $Z = AB^\intercal$. Then we corrupt $Z$ with standard Gaussian noise to get the observations $X$, using a signal to noise ratio of 1.
\subsubsection{Rank recovery}
In this experiment, we set $m = 10 q$ and $n = 10 q$. The algorithm was tested with $q$ ranging from 5 to 45. We set the rank truncation $r$ to $100$, which is sufficiently large for all data. For each matrix $Z$, the iteration number was fixed to 1000 and the result was averaged from last 200 samples (with first 800 discarded as burn-in). We simply initialize our sampler with uniformly distributed $U$ and $V$ with norms fixed to 0.9 and all $\dv$ fixed to zero. We run our Gibbs sampler on all the entries of $X$ to recover $Z$.

In the spectral regularization formulation, we can get the number of latent factors by simply counting the number of nonzero $d_k$'s. However, since our method uses MCMC sampling, it is diffcult to find some $d_k$ to vanish exactly. Instead of counting nonzero elements of $(d_1,d_2,\cdots,d_r)$ directly, we sort the tuple in an ascending order and try to locate $w = \argmax_{k\geq 2} {d_k/d_{k-1}}$ and then discard the set $\{d_k: d_k < d_w\}$. As a result, the recovered rank is $r - w + 1$. The middle panel of Figure~\ref{exam} provides an example about how to determine the number of latent factors.

The results of this experiment are summarized in Figure~\ref{rankrec}(c), showing that the recovered ranks are fairly aligned with the ground truth. Our algorithm performs perfectly well when the true rank is relatively small and slightly worse when the rank gets higher. This may be due to that larger rank requires more iterations for convergence.

We also illustrate how vectors get orthonormalized automatically on one of our synthetic matrices in Figure \ref{on}(a). The orthonormality of vectors are measured by the average values of their $\ell_2$ norms and inner products with each other. Figure~\ref{on}(a) shows that $U$ and $V$ get nearly orthonormalized after only one iteration. This phenomenon indicates that the vectors still tend to get orthonormalized even in the hierarchial Bayesian model.

\subsubsection{Different missing rates}
We generate matrices of different sizes and different missing rates to test the performance of our method, in comparison with BPMF, as it is the only one that can compete with GASR on real datasets, as illustrated in detail in the next section.

The \textit{Root Mean Squared Error} (RMSE) results are listed in Table~\ref{sparse}. The deviations and some additional settings are reported in supplementary material. We can see that GASR is considerably better than BPMF when the observed elements of a matrix are of a small number, demonstrating the robust estimates of GASR via sparsity-inducing priors.

\begin{table}[t]
\caption{Results on different missing rates}
\label{sparse}
\begin{center}
\begin{tabular}{ccccc}
\toprule
Setting & \multicolumn{4}{c}{$m=500,n=500,r=30,q=5$}  \\
\midrule
Missing-Rates & 90\%  & 80\%  & 50\%  & 0\% \\
\midrule
BPMF & $1.6842$ & $0.3210$ & $0.1304$ & $0.0933$\\
GASR & $0.1992$ & $0.1321$ & $0.0841$ & $0.0724$\\
\midrule
Setting & \multicolumn{4}{c}{$m=1000,n=1000,r=50,q=10$}  \\
\midrule
Missing-Rates & 90\%  & 80\%  & 50\%  & 0\% \\
\midrule
BPMF & $0.9422$ & $0.2396$ & $0.1105$ & $0.0859$\\
GASR & $0.2513$ & $0.1688$ & $0.1270$ & $0.1115$\\
\bottomrule
\end{tabular}
\end{center}
\end{table}

\subsection{Collaborative filtering on real datasets}

We test our algorithm on the MovieLens 1M\footnote{MovieLens datasets can be downloaded from http://grouplens.org/datasets/movielens/.} and EachMovie datasets, and compare results with various strong competitors, including max-margin matrix factorization (M\textsuperscript{3}F)~\cite{rennie2005fast}, infinite probabilistic max-margin matrix factorization (iPM\textsuperscript{3}F)~\cite{xu2012nonparametric}, softImpute~\cite{mazumder2010spectral}, softImpute-ALS (``ALS'' for ``alternating least squares'')~\cite{hastie2014matrix}, hierarchical adaptive soft impute (HASI)~\cite{todeschini2013probabilistic} and Bayesian probabilistic matrix factorization (BPMF)~\cite{salakhutdinov2008bayesian}.

The MovieLens 1M dataset contains 1,000,209 ratings provided by 6,040 users on 3,952 movies. The ratings 
are integers from $\{1,2,3,4,5\}$ and each user has at least 20 ratings. The EachMovie dataset consists of 2,811,983 ratings provided by 74,424 users on 1,648 movies. As in~\cite{marlin2004collaborative}, we removed duplicates and discarded users with less than 20 ratings. This left us with 36,656 users. There are 6 possible ratings from 0 to 1 and we mapped them to $\{1,2,\cdots,6\}$.

\textbf{Protocol:} We randomly split the dataset into 80\% training and 20\% test. We further split 20\% training data for validation for M\textsuperscript{3}F, iPM\textsuperscript{3}F, SoftImpute, SoftImpute-ALS and HASI to tune their hyperparameters. BPMF and GASR can infer hyperparameters from training data and thus do not need validation. We measure the performance using both RMSE and \textit{normalized mean absolute error} (NMAE), where NMAE~\cite{goldberg2001eigentaste} is defined as
\begin{align}
	\up{NMAE} = \frac{1}{|\Omega_{\up{test}}|} \frac{\sum_{(i,j)\in\Omega_{\up{test}}}|X_{ij} - Z_{ij}|}{\max(X) - \min(X)},
\end{align}
and $\Omega_{\up{test}}$ is the index set of entries for testing.

\begin{figure} 
\centering
\includegraphics[width = .8\columnwidth]{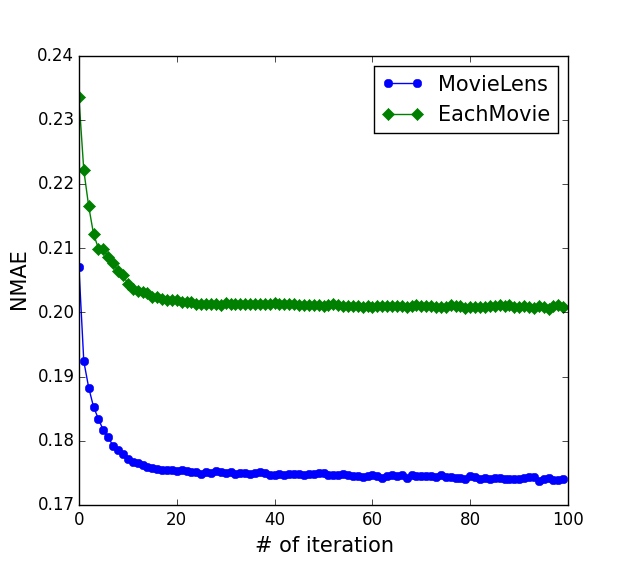} 
\caption{Convergence of NMAE on real datasets. We only show results on our first data partitions.}\label{iter} 
\end{figure}

\begin{table*}[t]
\caption{Experimental results of various methods on the MovieLens and EachMovie datasets.}\label{movielens} 

\begin{center}
\begin{tabular}[htbp]{c|c|c|c|c}
\hline
&
\multicolumn{2}{c|}{\textbf{MovieLens}} & \multicolumn{2}{c}{\textbf{EachMovie}}\\\hline
  \textbf{Algorithm}  & \textbf{NMAE} & \textbf{RMSE} & \textbf{NMAE} & \textbf{RMSE}\\
  \hline
  M\textsuperscript{3}F & $0.1652 \pm 0.0004$ & $0.9644 \pm 0.0012$ & $0.1932 \pm 0.0003$ & $1.4598\pm 0.0019$\\
  iPM\textsuperscript{3}F & $0.1604 \pm 0.0004$ & $0.9386 \pm 0.0024$ & $0.1819 \pm 0.0006$ & $1.3760 \pm 0.0034$\\
  SoftImpute   & $0.1829\pm0.0002$ & $0.9469\pm0.0009$ & $0.2039\pm0.0002$ & $1.2948\pm0.0037$\\
  SoftImpute-ALS & $0.1783\pm0.0001$ & $0.9196\pm0.0013$ & $0.2018\pm0.0004$ & $1.2757\pm0.0008$\\
  HASI & $0.1813\pm 0.0002$ & $0.9444\pm0.0011$ & $0.1992\pm0.0003$ & $1.2612\pm0.0016$\\
  BPMF & $0.1663\pm0.0002$ & $0.8460 \pm 0.0006$ & $0.2012\pm0.0001$ & $1.2363 \pm 0.0007$\\\hline
  GASR  & $0.1673\pm 0.0005$ & $0.8528\pm0.0025$ & $0.1930\pm0.0009$ & $1.2015\pm0.0044$\\\hline
\end{tabular}
\end{center} 
\end{table*}

\textbf{Implementation details:} The number of iterations of our sampler is fixed to 100 and the point estimates of $\E{P_{\Omega}^\perp (X)}$ are taken from the average of all 100 samples. We initialize our algorithm by generating uniformly distributed items for $U$ and $V$ and set all $d_k$ to 0. We also scale the norms of $\mbf{u}_k$ and $\mbf{v}_k$ to 0.9 for initialization. Figure \ref{iter} shows that our sampler converges after a few iterations with this fairly na\"ive initialization.

We use the R package \verb|softImpute| for SoftImpute and SoftImpute-ALS, and use the code provided by the corresponding authors for M\textsuperscript{3}F, iPM\textsuperscript{3}F, HASI and BPMF. The hyperparameters for M\textsuperscript{3}F, iPM\textsuperscript{3}F, SoftImpute, SoftImpute-ALS and HASI are selected via grid search on the validation set. We randomly initialize all methods except HASI
for which the initialization is the result of SoftImpute, as suggested in~\cite{todeschini2013probabilistic}. The results of BPMF are the averages over 100 samples, the same as ours.

For all the algorithms, we set the maximum number of iterations to 100. The rank truncation $r$ for MovieLens 1M and EachMovie is set to 30, where we follow the setting in~\cite{todeschini2013probabilistic} and find in experiments that larger $r$ does not lead to significant improvements.

\textbf{Results:}
Table \ref{movielens} presents the averaged NMAE and RMSE over 5 replications and their standard deviations.\footnote{The NMAE of HASI on MovieLens is slightly different from that in~\cite{todeschini2013probabilistic}, which was $0.172$, still inferior to ours. This may be due to differences in parameter selecting methods.} Overall, we can
see that our GASR achieves superior performance than most of the baselines. More specifically, we have the following observations:

First, GASR is comparable to BPMF, the state-of-the-art Bayesian method for low-rank matrix completion, on the MovieLens dataset; while it outperforms BPMF on the EachMovie dataset, with the observation that EachMovie dataset (97.8\% missing) is sparser than MovieLens (95.8\% missing). On both datasets, GASR also obtains much lower RMSE than iPM\textsuperscript{3}F, a state-of-the-art nonparametric Bayesian method based on IBP~\cite{griffiths2011indian} for matrix completion. Such results demonstrate the promise of performing Bayesian matrix completion via spectral regularization. Furthermore, GASR produces sparser solutions due to its sparsity-inducing priors on $\dv$. The ranks it infers on MovieLens and EachMovie are both $10$ on average, but the numbers of latent factors inferred by iPM\textsuperscript{3}F are both $30$, which is the rank truncation level. It is reported in~\cite{xu2013fast} with a similar setting that the optimal latent dimensions inferred by Gibbs iPM\textsuperscript{3}F, a Gibbs sampling version for iPM\textsuperscript{3}F model without rank truncations, are around 450 for MovieLens and 200 for EachMovie, which are much larger than ours.

Second, compared to HASI, a non-Bayesian method that adopts similar adaptive spectral regularization, and the other non-Bayesian methods based on squared-error losses (i.e., SoftImpute and SoftImpute-ALS), we achieve much better results on both datasets, demonstrating the advantages of Bayesian inference. Moreover, the better performance of HASI over SoftImpute demonstrates the benefit of adaptivity.

Finally, the max-margin based methods (i.e., M\textsuperscript{3}F and iPM\textsuperscript{3}F) have slightly better performance on NMAE but much worse results on RMSE than our GASR. One possible reason is that these methods are based on the maximum-margin criterion, which naturally minimizes absolute errors, while our method (and the others) is based on minimizing a squared-error loss.
Another reason, which may be the most important one, is that both M\textsuperscript{3}F and iPM\textsuperscript{3}F predict integer values while our method (and the others) gives real value predictions. We found that simply rounding these real value predictions to integers can greatly improve the performance on NMAE. For example, our GASR gives NMAE value $0.1569 \pm 0.0006$ and $0.1877 \pm 0.0003$ respectively on MovieLens and EachMovie datasets after rounding predictions to nearest integers.

\section{Conclusions and Discussions}

We present a novel Bayesian matrix completion method with adaptive relaxed spectral regularization. Our method exhibits the benefits of hierarchical Bayesian methods on inferring the parameters associated with adaptive relaxed spectral regularization, thereby avoiding parameter tuning. We estimate hyperparameters using Monte Carlo EM.  
Our Gibbs sampler 
exhibits good performance both in rank inference on synthetic data and collaborative filtering on real datasets.

Our method is based on a new formulation in Theorem \ref{cool2}. These results can be further generalized to other noise potentials with minor effort. For the Gibbs sampler, we can also extend to non-Gaussian potentials
as long as this potential has a regular p.d.f. that enables efficient sampling.

Finally, though we stick to Gibbs sampling in this paper, it would be interesting to investigate other Bayesian inference methods based on Theorem \ref{cool2} since it gets rid of many difficulties related to Stiefel manifolds. Such an investigation may lead to more scalable algorithms with better convergence property. Furthermore, better initialization methods other than uniformly generated random numbers may lead to much faster convergence, e.g., results from several iterations of HASI can usually provide a good starting point.

\subsection{Acknowledgments}
The work was supported by the National Basic Research Program (973 Program) of China (Nos.~2013CB329403, 2012CB316301), National NSF of China (Nos.~61322308, 61332007), Tsinghua National Laboratory for Information Science and Technology Big Data Initiative, and Tsinghua Initiative Scientific Research Program (No.~20141080934). We thank the Department of Physics at Tsinghua University for covering part of the travel costs.
\bibliography{Song.bib}
\bibliographystyle{aaai}

\section*{Supplementary Materials}
\subsection{Proof for Theorem 1}

\begin{proof}
\begin{table*}[ht]
\centering
\caption{Results on different missing rates}
\label{sparse}
\begin{tabular}{ccccc}
\toprule
Setting & \multicolumn{4}{c}{$m=500,n=500,r=30,q=5$}  \\
\midrule
Missing-Rates & 90\%  & 80\%  & 50\%  & 0\% \\
\midrule
BPMF & $1.6842\pm0.1374$ & $0.3210\pm 0.0168$ & $0.1304\pm0.0022$ & $0.0933 \pm 0.0000$\\
GASR & $0.1992\pm 0.0241$ & $0.1321\pm 0.0086$ & $0.0841\pm0.0028$ & $0.0724\pm 0.0036$\\
\midrule
Setting & \multicolumn{4}{c}{$m=1000,n=1000,r=50,q=10$}  \\
\midrule
Missing-Rates & 90\%  & 80\%  & 50\%  & 0\% \\
\midrule
BPMF & $0.9422\pm0.0478$ & $0.2396\pm0.0033$ & $0.1105\pm 0.0013$ & $0.0859 \pm 0.0007$\\
GASR & $0.2513\pm0.0045$ & $0.1688\pm 0.0041$ & $0.1270\pm0.0034$ & $0.1115\pm 0.0057$\\
\bottomrule
\end{tabular}
\end{table*}

Denote $Z = \sum_{k=1}^r d_k \bs{\alpha}_k \bs{\beta}_k^\intercal$ and conduct singular value decomposition to give
\begin{align}
	Z = \sum_{k=1}^r \sigma_k \b{u}_k \b{v}_k^\intercal  = \sum_{k=1}^r d_k \bs{\alpha}_k \bs{\beta}_k^\intercal\label{e0}
\end{align}
where $\sigma_{1:r}$ are singular values of $Z$ and $U = \{\mbf{u}_1,\mbf{u}_2,\cdots,\mbf{u}_m\}$ and $V = \{ \mbf{v}_1,\mbf{v}_2,\cdots,\mbf{v}_n \}$ are corresponding orthogonal matrices. Note that we denote $r = \min(m,n)$ throughout this paper. If the true rank is smaller than $\min(m,n)$, then singular values with indices larger than the rank are assumed to be zero. We will try to prove
\begin{equation}
	\sum_{k=1}^r \sigma_k \leq \sum_{k=1}^r d_k,\label{ieq}
\end{equation}
which actually implies all the assertions in the theorem.

For $\forall i \in [r]$, left multiply equation \eqref{e0} with $\b{u}_i^\intercal$ and right multiply with $\b{v}_i$ to obtain
\begin{align}
	\sigma_i = \sum_{k=1}^r d_k \b{u}_i^\intercal \bs{\alpha}_k \bs{\beta}_k^\intercal \b{v}_i. \label{e1}
\end{align}
Since $U$ and $V$ are orthogonal matrices with full ranks in a singular value decomposition, we can regard column vectors of $U$ and $V$ to be eigenbases of space $\bb{R}^m$ and $\bb{R}^n$. Hence it is natural to obtain
\begin{align}
	\bs{\alpha}_i = \sum_{j = 1}^m x_{ij} \mbf{u}_j,\quad
	\bs{\beta}_i = \sum_{j=1}^n y_{ij} \mbf{v}_j,
\end{align}
where $\forall i\in [r],\quad \sum_{j=1}^m x_{ij}^2 \leq 1$ and $\sum_{j=1}^n y_{ij}^2 \leq 1$.

We then rewrite Eq.~\eqref{e1} to give
\begin{align}
	\sigma_i = \sum_{k=1}^r d_k x_{ki}y_{ki}.
\end{align}
As a result,
\begin{align}
	\sum_{i=1}^r \sigma_i &= \sum_{i=1}^r\sum_{k=1}^r d_k x_{ki}y_{ki}\notag = \sum_{k=1}^r d_k \sum_{i=1}^r x_{ki}y_{ki}\notag
	\\\notag
	&\leq \sum_{k=1}^r d_k \left(\sum_{i=1}^r x_{ki}^2 \right)^{1/2}\left(\sum_{i=1}^r y_{ki}^2\right)^{1/2}\\\notag
	&\leq \sum_{k=1}^r d_k \left(\sum_{i=1}^m x_{ki}^2 \right)^{1/2}\left(\sum_{i=1}^n y_{ki}^2\right)^{1/2}\\
	&\leq \sum_{k=1}^r d_k,
\end{align}
which means for any valid tuples of $(d_{1:r},\bs{\alpha}_{1:r},\bs{\beta}_{1:r})$, replacing $\sum_{k=1}^r d_k \bs{\alpha}_k \bs{\beta}_k^\intercal$ with $ \sum_{k=1}^r \sigma_k \b{u}_k \b{v}_k^\intercal$ in P2, according to \eqref{ieq}, will not make the solution worse. This indicates that there is at least one optimal solution of P2 having the form of singular value decompositions like those in P1$'$.

As a result, we always have $s \leq t$, because for any optimal solution of P2, we can get an SVD form compatible to the constraints of P1$'$ with an objective value not larger. However, considering the fact that P2 is basically the same problem as P1$'$ with looser constraints, we conclude that $t \leq s$. Following the reasoning above we get $s \leq t$ and $s \geq t$, which exactly means $s = t$.

Suppose we have got the optimal solutions of P2, which is denoted as $(d_{1:r}^*,\bs{\alpha}_{1:r}^*,\bs{\beta}_{1:r}^*)$. We assert that $Z^* = \sum_{k=1}^r d_k^* \bs{\alpha}_k^* \bs{\beta}_k^{*\T}$ is the optimal solution of P1, because plugging $Z^*$ into P1 will yield a value not greater than $t$. Since $s = t$ and $s$ is the minimum possible value of P1, we conclude that plugging $Z^*$ into P1 gets the value $s$, which means $Z^*$ is the optimal solution for P1.

Similarly, suppose that the optimal solution of P1 is $Z^\dagger$, we compute its singular value decomposition to get $Z^\dagger = \sum_{k=1}^r \sigma_k^\dagger \mbf{u}_k^\dagger {\mbf{v}_k^\dagger}^\T$. Then plugging $(\sigma^\dagger_{1:r},\mbf{u}_{1:r}^\dagger,\mbf{v}_{1:r}^\dagger)$ into P2 will give the value $s$. Since $s = t$, we conclude that $Z^\dagger$ is an optimal solution for P2.

Note that it is practically very difficult for $\sum_{k=1}^r \sigma_k = \sum_{k=1}^r d_k$ to hold as this requires $\sum_{i=1}^r x_{ki}^2 = \sum_{i=1}^r y_{ki}^2 = 1$ and $x_{ki} = y_{ki}$, for all $k,i \in[r]$. This means that conducting singular value decomposition to any $Z = \sum_{k=1}^r d_k \bs{\alpha}_k \bs{\beta}_k^\T$ and substituting singular values and vectors into P2 can typically get a better result, which indicates that $\bs{\alpha}_{1:r}$ and $\bs{\beta}_{1:r}$ will nearly always get orthonormalized automatically under the unit sphere constraints.

\end{proof}

\begin{remark}\label{rmk}
The optimal solutions for P2 are not necessarily unique. As a result, the optimal $\bs{\alpha}_{1:r}$ and $\bs{\beta}_{1:r}$ for P2 are not always orthonormalized, though orthonormal vectors provide a solution. However, what matters is not the orthonormality of $\bs{\alpha}_{1:r}$ and $\bs{\beta}_{1:r}$, but the equivalence of optimal solution $Z = \sum_{k=1}^r d_k \bs{\alpha}_k \bs{\beta}_k^\intercal$. Theorem~1 asserts that P1 and P2 produce the same set of optimal matrix completion results. As a result, the MAP problem constructed according to P2 is anticipated to function similarly as the one constructed from P1.

Note that the condition for strict equality in $\sum_{k=1}^r \sigma_k^* \leq \sum_{k=1}^r d_k^*$ is practically very hard to satisfy.
\end{remark}

\begin{remark}
This special relationship between P1 and P2 in Theorem~1 can be generalized to other forms of noise potentials besides the squared-error loss as well as the max-margin hinge loss used in MMMF, as we do not need the property of $\norm{\cdot}_F$ in our proof. The theorem should still hold if we replace $\norm{\cdot}_F$ with $\norm{\cdot}_1$, $\norm{\cdot}_\infty$, etc. 
\end{remark}
\subsection{Detailed Experimental Results for Different Missing Rates}

In this experiment, we run both BPMF and GASR for 100 iterations and average all 100 samples to produce the final result. The average RMSE and corresponding deviations on 3 randomly generated datasets are reported in Table~\ref{sparse}. 

\end{document}